\documentclass[10pt]{amsart}


\usepackage{amsmath,amssymb,amsfonts,amsthm}
\newtheorem{theorem}{Theorem}[section]
\newtheorem{maintheorem}{Theorem}

\newtheorem{secondtheorem}{Theorem}

\newtheorem{proposition}[theorem]{Proposition}
\newtheorem{corollary}[theorem]{Corollary}
\newtheorem{lemma}[theorem]{Lemma}
\theoremstyle{definition}
\newtheorem{definition}[theorem]{Definition}
\newtheorem{example}[theorem]{Example}
\newtheorem{remark}[theorem]{Remark}
\numberwithin{equation}{section}

\usepackage[english]{babel}
\usepackage[latin1]{inputenc}

\def\im{\imath}

\def\C{{\mathbb{C}}}

\def\P{{\mathbb{P}}}

\def\Z{{\mathbb{Z}}}

\def\FF{{\mathcal{F}}}
\def\BB{\textrm{BB}}
\def\tr{{\textrm{Tr}}}
\def\res{{\textrm{Res}}}
\def\cod{\textrm{codim}}
\def\a{{\alpha}}
\def\b{{\beta}}
\def\g{{\gamma}}
\def\ab{{\alpha\beta}}
\def\la{{\lambda}}
\def\si{{\sigma}}
\def\va{{\varphi}}

\def\o{\omega}

\def\Om{{\Omega}}

\def\O{{\mathcal{O}}}

\begin{document}
\title [On non-Kupka points of codimension one foliations]
{ On non-Kupka points of codimension one foliations on $\P^3$ }

\author[Calvo-Andrare, C\^orrea, Fern\'andez--P\'erez]
{Omegar Calvo-Andrade 
	\and Maur\'icio Corr\^ea
	\and Arturo Fern\'andez--P\'erez }

\thanks{Calvo-Andrade: FAPESP $n^o$ 2014/23594--6, CONACYT 262121}
\thanks{Corr\^ea: CAPES--DGU 247/11, CNPq 300352/2012--3 and PPM--00169--13}
\thanks{Fern\'andez--P\'erez: CAPES--Bolsista da CAPES--Brasilia/Brasil, IMPA - Brasil}

\dedicatory{To Jos\'e Seade in his 60 birthday}
\address{\emph{Calvo-Andrade.} CIMAT: Ap. Postal 402, \\ Guanajuato, 36000,\\ Gto. M\'exico}
\email{omegar.mat@gmail.com} 

\address{\emph{Corr\^ea Jr.}: Depto. de Mat.--ICEX
	Universidade Federal de Minas Gerais, UFMG}
\curraddr{Av. Ant\^onio Carlos 6627, 31270-901,
	Belo Horizonte-MG, Brasil.}
\email{mauriciomatufmg@gmail.com}

\address{\emph{Fern\'andez-P\'erez}: Depto. de Mat.--ICEX
	\\ Universidade Federal de Minas Gerais, UFMG}
\curraddr{Av. Ant\^onio Carlos 6627, 31270-901,
	Belo Horizonte-MG, Brasil.}
\email{arturofp@mat.ufmg.br}
\keywords {Holomorphic foliations, non-Kupka points, ample vector bundle. Folhea\c{c}\~oes holomorfas, pontos n\~ao-kupka, fibrados vetoriais amplos.}
\subjclass{37F75, 32S65}
\date{}

\maketitle

{\small\bf Abstract.} {\small\it We study the singular set of a codimension one holomorphic foliation on $\P^3$. We find a local normal form for these foliations near a codimension two component of the singular set that is not of Kupka type. We also determine the number of non-Kupka points immersed in a codimension two component of the singular set of a codimension one foliation on $\P^3$.
} 
\vskip .2in
{\small\bf Resumo.} {\small\it 
	Estudamos o conjunto singular de uma folhea\c{c}\~ao holomorfa de codimens\~ao um em  $\P^3$.
	Encontramos uma forma normal local para tais folhea\c{c}\~oes em torno de uma componente de codimens\~ao dois do seu conjunto singular que n\~ao \'e do tipo Kupka. Tamb\'em, determinamos o n\'umero de pontos n\~ao--Kupka imersos numa componente de codimens\~ao dois de uma folhea\c{c}\~ao de codimens\~ao um em  $\P^3$.}

\section{Introduction}

A \textit{regular codimension one holomorphic foliation} on a complex manifold $M$, can be defined by a triple 
$\{(\mathfrak{U},f_{\a},\psi_{\ab})\}$ where 
\begin{enumerate}
	\item[(i)] $\mathfrak{U}=\{U_{\a}\}$ is an open cover of $M$.
	\item[(ii)] $f_{\a}:U_{\a}\rightarrow\C$ is a holomorphic submersion for each $\alpha$.
	\item [(iii)] A family of biholomorphisms  
	$\{\psi_{\ab}:f_{\b}(U_{\ab})\to f_{\a}(U_{\ab})\}$ such that  
	\[
	\psi_{\ab}=\psi_{\b\a}^{-1},\quad f_{\b}|_{U_\a\cap U_\b}=\psi_{\b\a}\circ f_{\a}|_{U_\a\cap U_\b}
	\quad\mbox{and}\quad
	\psi_{\a\g}=\psi_{\ab}\circ\psi_{\b\g}.
	\]
\end{enumerate}
Since  
$df_{\a}(x)=\psi'_{\ab}(f_{\b}(x))\cdot df_{\b}(x),$ the set $ F=\displaystyle\bigcup_{\a} Ker(df_{\a})\subset TM$ is a subbundle. Also
$[\psi'_{\ab}(f_{\b})]\in \check{H}^{1}(\mathfrak{U},\O^{\ast})$
define a line bundle $N=TM/F$. The family of 1--forms $\{df_{\a}\},$ glue to a section  
$\o\in H^0(M,\Om^1(N))$. We get
\[
0\rightarrow F\rightarrow TM\xrightarrow{\{df_{\a}\}}N\rightarrow0, \quad
0\rightarrow\FF\rightarrow \Theta\xrightarrow{\{df_{\a}\}}\mathcal{N}\rightarrow0,\quad [\FF,\FF]\subset\FF
\]
where $\FF=\O(F),\, \Theta=\O(TM),\mbox{ and } \mathcal{N}=\O(N)$. We also have   
\[
\wedge^{n}TM^{\ast}=det(F^{\ast})\otimes N^{\ast},\qquad \Om^n_M:=K_M=
det(\FF^{\ast})\otimes \mathcal{N}^{\ast},\quad n=dim(M).
\]

\begin{definition} Let $M$ be a compact complex manifold with $\dim(M)=n$.
	A \textit{singular codimension one holomorphic foliation on $M$}, may be defined by one of the following ways:
	\begin{enumerate}
		\item A pair $\mathcal{F}=(S,\FF)$, where $S\subset M$ is an analytic subset of $\cod(S)\geq2$, and $\FF$ is a regular codimension one holomorphic foliation on $M\setminus S$.
		
		\item A class of sections $[\o]\in \P H^0(M, \Om^1(L))$ where $L\in Pic(M)$, such that 
		\begin{enumerate}
			\item[(i)] the singular set $S_{\o}=\{p\in M| \o_p=0  \}$ has $\cod(S_{\o})\geq2$.
			
			\item[(ii)] $\o \wedge d\o=0$ in $H^0(M,\Om^3(L^{\otimes 2}))$.
		\end{enumerate}
		We denote by $\mathcal{F}_{\o}=(S_\o,\FF_\o)$ the foliation represented by $\o$.
		\item An exact sequence of sheaves
		\[
		0\rightarrow \FF\rightarrow \Theta\rightarrow \mathcal{N}\rightarrow 0,\quad [\FF,\FF]\subset \FF
		\] 
		where $\FF$ is a reflexive sheaf of rank $rk(\FF)=n-1$ 
		with torsion free quotient $\mathcal{N} \simeq \mathcal{J}_S\otimes L,$ where $\mathcal{J}_S$ is an ideal sheaf for some closed scheme $S$.
	\end{enumerate}
\end{definition}
\noindent These three definitions are equivalents. 
\begin{remark} Let $\o\in H^0(M,\Om^1(L))$ be a section.
	\begin{enumerate}
		\item[(i)] The section $\o$ may be defined by a family of 1-forms 
		\[
		\o_{\a}\in \Om^1(U_{\a}),\quad \o_{\a}=\la_{\ab}\o_{\b}\mbox{ in }
		U_{\ab}={U_\a\cap U_\b},\quad L=[\la_{\ab}]\in \check{H}^{1}(\mathfrak{U},\O^{\ast}).
		\]
		\item[(ii)] The section $\o$ is a morphism of sheaves $\Theta\xrightarrow{\o} L$. 
		The kernel of $\o$ is \textit{the tangent sheaf} $\FF$. The image of $\o$ is a twisted ideal sheaf $\mathcal{N}=\mathcal{J}_{S_{\o}}\otimes L$. It is called the \textit{normal sheaf}. 
		
		\item[(iii)] As in the non singular case, the following equality of line bundles holds 
		\[ 
		K_M=\Om^n_M=det(\FF^{\ast})\otimes \mathcal{N}^{\ast}=K_{\FF}\otimes L^{-1},\quad det(N)\simeq L
		\]
		where $K_M, K_{\FF}=det(\FF^{\ast})$ are the canonical sheaf of $M$ and $\FF$ respectively.
	\end{enumerate}
\end{remark}
We denote by 
\[
\begin{array}{rcl}
\mathcal{F}(M,L) &=& \{[\o]\in\P H^0(M,\Om^1(L))\, | \,\cod(S_\o)\geq 2,\quad \o\wedge d\o=0 \,\} \\
\mathcal{F}(n,d) &=& \{[\o]\in\P H^0(\P^n,\Om^1(d+2))\,|\,\cod(S_\o)\geq 2,\quad \o\wedge d\o=0\}.
\end{array}
\] 
The number $d\geq0$ is called the \textit{degree of the foliation} represented by $\o$.

\subsection{Statement of the results}
\label{subsec:SR} 
In the sequel, $M$ is a compact complex manifold with $dim(M)\geq3$. We will use any of the above definitions for foliation. The singular set will be denoted by $S$.
Observe that $S$ decomposes as 
\[ 
S=\bigcup_{k=2}^{n} S_k\quad\text{where}\quad \cod(S_k)=k. 
\]
For a foliation $\mathcal{F}$ on $M$ represented by $\o\in\mathcal{F}(M,L),$ the Kupka set \cite{K,M} is defined by
\[
K(\o)=\{ p\in M\, |\: \o(p)=0,\, d\o(p)\neq 0\}.
\]
We recall that for points near $K(\o)$ the foliation $\mathcal{F}$ is biholomorphic to a product of a dimension one foliation in a transversal section by a regular foliation of codimension two \cite{K} and in particular we have $K(\o)\subset S_2$.  
\par In this note, we focus our attention on the set of non-Kupka points $N\!K(\o)$ of $\o$. The first remark is 
\[
N\!K(\o)=\{ p\in M | \o(p)=0,\, d\o(p)=0\}\supset S_3\cup\cdots\cup S_n.
\]
We analyze three cases, one in each section, the last two being the core of the work.

\begin{enumerate}
	\item $S_2=K(\o)$, then $N\!K(\o)=S_3\cup\dots\cup S_n$. 
	
	\item There is an irreducible component $Z\subset S_2$ such that $Z\cap K(\o)=\emptyset$.  
	
	\item For a foliation $\o\in\mathcal{F}(3,d)$. Let $Z\subset S_2$ be a connected component such that $Z\setminus Z\cap K(\o)$ is a finite set of points.
\end{enumerate}

The first case has been considered in \cite{Br1,C,C1,CS,CL}. Let $\o\in\mathcal{F}(n,d)$ be a foliation with  $K(\o)=S_2$ and connected, then $\o$ has a meromorphic first integral. In the generic case, the leaves  define a \textit{Lefschetz} or \textit{a Branched Lefschetz Pencil}. The non-Kupka points are isolated singularities $N\!K(\o)=S_n$. In this note, we present a new and short proof of this fact when the transversal type of $K(\o)$ is radial.

In the second section, we study the case of a non-Kupka irreducible component of $S_2$. These phenomenon arise naturally in the intersection of irreducible components of $\mathcal{F}(M,L)$.  The following result is a local normal form for $\o$ near the singular set and is a consequence of a result of F. Loray \cite{L}. 

\begin{maintheorem}\label{main_theorem}
	Let $\o\in \Om^{1}(\C^n,0)$, $n\geq3$, be a germ of integrable 1--form such that $\cod(S_\o)=2$, $0\in S_\o$ is a smooth point and $d\o= 0$ on $S_\o$. If $j^1_{0}\o\neq0$, then or either 
	\begin{enumerate}
		\item there exists a coordinate system $(x_1,\dots,x_n)\in\mathbb{C}^n$ such that $$j^{1}_0(\omega)=x_1dx_2+x_2dx_1$$ and $\mathcal{F}_{\o}$ is biholomorphic to the product of a dimension one foliation in a transversal section by a regular foliation of codimension two, or
		\item there exists a coordinate system $(x_1,\dots,x_n)\in\mathbb{C}^n$ such that $$\o= x_1dx_1+g_{1}(x_2)(1+x_1g_{2}(x_2))dx_2,$$ 
		such that $g_1, g_2\in\mathcal{O}_{\C,0}$ with $g_1(0)=g_2(0)=0$, or 
		\item $\omega$ has a non-constant holomorphic first integral in a neighborhood of $0\in\mathbb{C}^n$.
	\end{enumerate}
\end{maintheorem}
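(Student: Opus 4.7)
The plan is to reduce $\o$ to a two-variable integrable $1$-form using a preparation theorem of Loray \cite{L}, and then apply the classical classification of planar integrable germs.

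First, since $0\in S_\o$ is a smooth point of a codimension-two analytic set, choose coordinates $(x_1,\dots,x_n)$ so that $S_\o=\{x_1=x_2=0\}$, and write $\o=A_1\,dx_1+A_2\,dx_2+\sum_{i\geq 3}C_i\,dx_i$ with every coefficient in the ideal $(x_1,x_2)$. The hypothesis $d\o|_{S_\o}=0$ forces $d\o(0)=0$, hence $\o^{(1)}=\sum_{i}(\a_i x_1+\b_i x_2)\,dx_i$ is closed. Computing $d\o^{(1)}$ then gives $\a_i=\b_i=0$ for $i\geq 3$ and $\a_2=\b_1$, so the coefficients of $dx_i$ with $i\geq 3$ lie in $(x_1,x_2)^2$, and $\o^{(1)}=dP$ for some non-zero quadratic form $P=P(x_1,x_2)$.

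Next, invoke Loray's preparation theorem \cite{L} for integrable holomorphic $1$-forms whose singular locus contains a smooth codimension-two component: there exists a biholomorphism of $(\C^n,0)$ fixing $S_\o$ after which $\o$ involves only $x_1,x_2$, i.e.\ $\o=a(x_1,x_2)\,dx_1+b(x_1,x_2)\,dx_2$. Thus $\FF_\o$ is the pull-back of a planar foliation along the projection $\C^n\to\C^2$. Now classify by the rank of $P$. If $P$ has rank $2$, a complex linear change of $(x_1,x_2)$ brings $P$ to $x_1 x_2$, so $j^1_0\o=x_1\,dx_2+x_2\,dx_1$; since $\o$ depends only on $x_1,x_2$, the foliation splits as the product of the planar dimension-one foliation on $\{x_3=\cdots=x_n=0\}$ and the regular codimension-two foliation $\{x_1=c_1,\,x_2=c_2\}$, giving case (1). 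If $P$ has rank $1$, a linear change brings $P$ to $\tfrac12 x_1^2$, so $j^1_0\o=x_1\,dx_1$; applying the implicit function theorem to the coefficient of $dx_1$ and then dividing by a unit puts $\o$ in the shape $x_1\,dx_1+Q(x_1,x_2)\,dx_2$ with $Q\in(x_1,x_2)^2$. In this planar rank-one situation the standard dichotomy applies: either $\o$ admits a non-constant holomorphic first integral, giving case (3), or the integrability $\o\wedge d\o=0$ forces the $x_1$-dependence of $Q$ to factor as $Q(x_1,x_2)=g_1(x_2)(1+x_1 g_2(x_2))$ with $g_1(0)=g_2(0)=0$, giving case (2).

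The main obstacle is the first step: correctly invoking Loray's preparation theorem to reduce to the two-variable setting and verifying that the hypothesis $d\o|_{S_\o}=0$ supplies what Loray needs. Once $\o$ has been reduced to a planar germ, the remaining analysis is essentially classical. The most delicate remaining point is the rank-one case, where one must extract the precise factorization $Q=g_1(x_2)(1+x_1 g_2(x_2))$; this is done by expanding $\o\wedge d\o$ in powers of $x_1$ and using the absence of a first integral to eliminate all other terms in the normal form.
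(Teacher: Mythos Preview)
Your proposal has a genuine gap in the invocation of Loray's theorem. Loray's preparation theorem does \emph{not} produce a biholomorphism after which $\o$ depends only on two coordinates. What it yields (in the situation $j^1_0\o=x\,dx$, which is the only case where the paper invokes it) is a coordinate system $(x,\zeta)\in\C\times\C^{n-1}$, a germ $f\in\O_{\C^{n-1},0}$ with $f(0)=0$, and germs $g,h\in\O_{\C,0}$ such that
\[
\o=x\,dx+\bigl(g(f(\zeta))+xh(f(\zeta))\bigr)\,df(\zeta).
\]
This is a pull-back $\o=\va^{*}\eta$ via $\va(x,\zeta)=(x,f(\zeta))$, but $\va$ is in general \emph{not} a biholomorphism because $df(0)$ may vanish. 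The dichotomy between cases (2) and (3) in the paper is precisely the dichotomy on $f$: writing $f(\zeta)=\zeta_1^{k}\psi(\zeta)$, if $\psi(0)\neq0$ one can absorb $\psi$ into a coordinate change and reduce to two variables (leading to case (2)); if $\psi(0)=0$ one shows $g(0)\neq0$, so $\eta$ is nonsingular at the origin and Frobenius gives a first integral, which pulls back to case (3). The example $f(y,z)=y^2z$ given right after the statement shows that the reduction to two coordinate variables is genuinely impossible in general. Your outline collapses this distinction and therefore cannot produce case (3) correctly.

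There is a second, independent gap in your rank-one analysis. Once you are in $\C^2$, the integrability condition $\o\wedge d\o=0$ is vacuous (it is a $3$-form on a surface), so ``expanding $\o\wedge d\o$ in powers of $x_1$'' yields nothing. In the paper the factorization $Q(x_1,x_2)=g_1(x_2)\bigl(1+x_1g_2(x_2)\bigr)$ comes from the hypothesis $d\o|_{S_\o}=0$: after Loray one has $\o=x\,dx+(g_1(y)+xh_1(y))\,dy$ with $S_\o=\{x=g_1(y)=0\}$, and $d\o=h_1(y)\,dx\wedge dy$ vanishing there forces $g_1\mid h_1$. Finally, note that the rank-two case is handled in the paper by a result of Cerveau--Mattei, not by Loray; Loray's theorem in its stated form requires the $1$-jet to be $x\,dx$.
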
  
The alternatives are not exclusives. The following example was suggest by the referee and show that the case (3) of Theorem \ref{main_theorem} cannot be avoid. 
\begin{example}
	Let $\omega$ be a germ of a 1-form at $0\in\mathbb{C}^3$ defined by $$\o=xdx+(1+xf)df$$ where $f(x,y,z)=y^2z$. We have
	$$\omega=xdx+2yz(1+xy^2z)dy+y^2(1+xy^2z)dz.$$ The singular set of 
	$\o$ is $\{x=y=0\}$ and $\{x=z=y^2=0\}$, therefore the singular set has an embedding point $\{x=z=y^2=0\}$ and $d\o$ vanish along $\{x=y=0\}$. We will show that $\omega$ has a holomorphic first integral $F$ in a neighborhood of $0\in\mathbb{C}^3$. In fact, let $t=f(x,y,z)=y^2z$ and set $\varphi:(\mathbb{C}^3,0)\to(\mathbb{C}^2,0)$ defined by $$\varphi(x,y,z)=(x,t).$$
	Let $\eta=xdx+(1+xt)dt$ be 1-form at $0\in\mathbb{C}^2$, note that $\omega=\varphi^{*}(\eta)$ and moreover $\eta(0,0)\neq 0$, this implies that $\eta$ is non-singular at $0\in\mathbb{C}^2$ and by Frobenius theorem $\eta$ has a holomorphic first integral $H(x,t)$ on $(\mathbb{C}^2,0)$. Defining $H_1(x,y,z):=H(x,f(x,y,z))=H(x,y^2z)$, we get $H_1$ is a holomorphic first integral for $\omega$ in a neighborhood of $0\in\mathbb{C}^3$.
\end{example}

We apply Theorem \ref{main_theorem} to a codimension one holomorphic foliation of the projective space with empty Kupka set. 

About the third case, consider a foliation $\o\in \mathcal{F}(3,d)$. Let $Z$ be a connected component of $S_2$. We count the number $|Z\cap NK(\o)|$ of non-Kupka points of $\o$ in $Z\subset S_2$.

\begin{secondtheorem}\label{Thm2}
	Let $\o\in \mathcal{F}(3,d)$ be a foliation and 
	$Z\subset S_2$ a connected component of $S_2$. Suppose that $Z$ is a local complete intersection and $Z\setminus Z\cap K(\omega)$ is a finite set of points, then
	$d\o|_{Z}$ is a global section of $K^{-1}_{Z}\otimes K_{\mathcal{F}}|_{Z}$ and the associated divisor $D_\o=\displaystyle\sum_{p\in Z}ord_p(d\o)\cdot p$ has degree 
	$$\deg(D_{\o}) = \deg(K_{\FF})-\deg(K_{Z}).$$ 
\end{secondtheorem}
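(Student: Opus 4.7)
My plan is to first reinterpret the target line bundle. By adjunction, $K_{\P^3}|_Z \simeq K_Z \otimes \det N_Z^{\ast}$, and together with $K_{\FF} = K_{\P^3} \otimes L$ (for $L = \O_{\P^3}(d+2)$) this gives a canonical identification
\[
K_{\FF}|_Z \otimes K_Z^{-1} \simeq \det N_Z^{\ast} \otimes L.
\]
The local complete intersection hypothesis ensures that $N_Z^{\ast} = \mathcal{I}_Z/\mathcal{I}_Z^2$ is locally free of rank $2$, so the conormal sequence $0 \to N_Z^{\ast} \to \Om^1_{\P^3}|_Z \to \Om^1_Z \to 0$ induces a natural injection $\det N_Z^{\ast} \otimes L \hookrightarrow \Om^2_{\P^3}|_Z \otimes L$. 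Thus the core of the proof reduces to showing that the global section $d\o|_Z$, which a priori lives in $\Om^2_{\P^3}|_Z \otimes L$, actually takes values in this subsheaf.

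To prove the subsheaf membership I would first check it on the dense open set $Z \cap K(\o)$. Kupka's theorem supplies coordinates $(x,y,t)$ near any Kupka point $p \in Z$ with $Z = \{x=y=0\}$ and $\o = A(x,y)\,dx + B(x,y)\,dy$; hence
\[
d\o = (B_x - A_y)\, dx \wedge dy
\]
manifestly lies in $\det N_Z^{\ast} \otimes L$ and is nonvanishing at $p$. To extend the conclusion across the finitely many non-Kupka points, I would work locally: use the LCI presentation $\mathcal{I}_Z = (f_1, f_2)$ to write $\o = f_1\a_1 + f_2\a_2$, which yields
\[
d\o|_Z = df_1 \wedge \a_1|_Z + df_2 \wedge \a_2|_Z.
\]
The image of $d\o|_Z$ in the quotient sheaf $N_Z^{\ast} \otimes \Om^1_Z \otimes L$ is then a holomorphic section vanishing on the dense Kupka open set; by the reducedness of $Z$ and the torsion-freeness of this quotient on the smooth part of $Z$, it must vanish identically, placing $d\o|_Z$ in $\det N_Z^{\ast} \otimes L$ on all of $Z$.

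Once the line bundle is identified, the degree formula drops out: $d\o|_Z$ is a global section of $\det N_Z^{\ast} \otimes L \simeq K_{\FF}|_Z \otimes K_Z^{-1}$, whose degree equals $\deg(K_{\FF}|_Z) - \deg(K_Z)$. The support of its zero divisor is precisely $Z \setminus (Z \cap K(\o))$, since $d\o$ is nonzero at every Kupka point by definition and vanishes at each non-Kupka point with order $\textrm{ord}_p(d\o)$; this gives $D_\o$ and the claimed equality $\deg(D_\o) = \deg(K_{\FF}) - \deg(K_Z)$.

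The main obstacle in this plan is the extension step: one must control the "transverse" components of $d\o|_Z$ near a non-Kupka point at which $Z$ itself may fail to be smooth. The LCI hypothesis is what makes this tractable, because it guarantees that $N_Z^{\ast}$ is locally free and that the conormal sequence has the correct form, so the coefficients of $\a_i|_Z$ along a transverse direction on $Z$ are intrinsically defined, analytically well-behaved functions whose vanishing on a dense open propagates to all of $Z$ by reducedness.
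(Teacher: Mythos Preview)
Your proof is correct and follows essentially the same route as the paper's: both use the conormal sequence of the local complete intersection $Z$ together with the identifications $\wedge^2(\mathcal{J}/\mathcal{J}^2)\simeq K_Z^{-1}\otimes K_{\P^3}|_Z$ and $L\simeq K_{\P^3}^{-1}\otimes K_{\FF}$ to realize $d\o|_Z$ as a section of $K_Z^{-1}\otimes K_{\FF}|_Z$, after which the degree formula is immediate. You are in fact more careful than the paper at the one substantive step, namely why $d\o|_Z$ lands in the rank-one subsheaf $\det N_Z^{\ast}\otimes L\subset \Om^2_{\P^3}|_Z\otimes L$: the paper simply asserts this (referring only to the earlier remark that $d\o|_Z$ is well defined in $\Om^2_{\P^3}|_Z\otimes L$), whereas you verify it at Kupka points via the product normal form and then propagate by density across the finitely many non-Kupka points, which is exactly the justification that step needs.
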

\par Note that the section $d\o|_{Z}$ vanishes exactly in the non-Kupka points of $\omega$ in $Z$ then the above theorem determine the number $|Z\cap NK(\o)|$ (counted with multiplicity) of non-Kupka points of $\omega$ in $Z$.


\section{The singular set}
\label{sec:Sing}
Let $\omega\in\mathcal{F}(M,L)$ be a codimension one holomorphic foliation then singular set of $\omega$ may be written as 
\[S= \bigcup_{j=2}^n S_j\qquad \mbox{ where }\qquad \cod(S_j)=j.\] 
The fact that $K(\omega)\subset S_2$ implies that $S_3\cup\ldots\cup S_n\subset NK(\omega)$. To continue we focus in the components of singular set of $\omega$ of dimension at least three. 
\subsection{Singular set of codimension at least three}
We recall the following result due to Malgrange \cite{Ma}.

\begin{theorem}[Malgrange]
	Let $\o$ be a germ at $0\in \C^n$, $n\geq3$ of an integrable 1--form singular at $0$,  if $\cod(S_{\o})\geq 3$, then there exist $f\in\O_{\C^n,0}$ and $g\in \O^{\ast}_{\C^n,0}$ such that  
	\[
	\o = g df\quad\text{on a neighborhood of}\,\,\, 0\in\C^n.
	\] 
\end{theorem}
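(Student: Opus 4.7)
The plan is to integrate $\o$ away from its singular set, producing a non-constant holomorphic first integral on $U\setminus S_\o$ for a small ball $U$ around $0$, and then extend that integral across $S_\o$ using the codimension hypothesis and Hartogs' theorem. Integrability $\o\wedge d\o=0$ furnishes the local Frobenius picture, while the inequality $\cod(S_\o)\geq 3$ is used twice: once to trivialize the monodromy of local first integrals, and once to extend the resulting data holomorphically across the singular locus.

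First, on $U\setminus S_\o$ the form $\o$ is nowhere vanishing and integrable, so by the classical (non-singular) Frobenius theorem every point $p$ has an open neighborhood $V_\a$ on which
\[
\o|_{V_\a}=g_\a\,df_\a,\qquad g_\a\in\O^{\ast}(V_\a),\ f_\a\colon V_\a\to\C\ \text{a submersion}.
\]
On overlaps $V_\a\cap V_\b$ the two submersions cut out the same foliation, so they are related by $f_\b=\varphi_{\ab}(f_\a)$ for a local biholomorphism $\varphi_{\ab}$ of $(\C,0)$, and the units transform by $g_\b=g_\a/\varphi'_{\ab}(f_\a)$.

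Next I would promote this to a global integrating factor. Integrability rewrites as $d\o=-d\log(g_\a)\wedge \o$ on each $V_\a$, so the local 1--forms $-d\log(g_\a)$ patch, modulo multiples of $\o$, to a Čech class whose vanishing would yield a global unit $g\in\O^{\ast}(U\setminus S_\o)$ with $d(\o/g)=0$. The key input here is that $\cod(S_\o)\geq 3$ forces the relevant cohomology on $U\setminus S_\o$ to vanish: in real dimensions $S_\o$ has real codimension at least $6$, so $U\setminus S_\o$ is simply connected, and by Scheja's extension theorem for holomorphic functions (Hartogs-type, valid once $\cod(S_\o)\geq 2$) the sheaf cohomology used to trivialize the cocycle $\{\varphi_{\ab}\}$ is trivial. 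Consequently there is a global $g\in\O^{\ast}(U\setminus S_\o)$ such that $\o/g$ is a closed holomorphic 1-form on the simply connected domain $U\setminus S_\o$, whence $\o/g=dF$ for some $F\in\O(U\setminus S_\o)$.

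Finally, I would extend $F$ and $g$ across $S_\o$. Since $\cod(S_\o)\geq 2$, Hartogs' extension theorem produces $f\in\O_{\C^n,0}$ and $\tilde g\in\O_{\C^n,0}$ extending $F$ and $g$ respectively, and the identity $\o=\tilde g\,df$ persists by continuity. It remains to check that $\tilde g$ is a unit at $0$: if $Z(\tilde g)$ were non-empty, it would be a pure codimension-one analytic subset of $U$, and on $Z(\tilde g)\setminus S_\o$ the equality $\o=\tilde g\,df=0$ would contradict the non-vanishing of $\o$ off the singular set; hence $Z(\tilde g)\subset S_\o$, and since $\cod(S_\o)\geq 3>1$, purity forces $Z(\tilde g)=\emptyset$, i.e.\ $\tilde g\in\O^{\ast}_{\C^n,0}$. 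The hard step is the middle one: identifying precisely which sheaf-cohomological vanishing on $U\setminus S_\o$ is implied by $\cod(S_\o)\geq 3$ and showing that it suffices to globalize the local Frobenius data; everything else reduces to standard extension and purity arguments.
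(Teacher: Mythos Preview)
The paper does not prove this statement: it is recalled from Malgrange's original article \cite{Ma} and used as a black box. So there is no ``paper's own proof'' to compare against.

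As for your proposal: the outline is standard (local Frobenius on $U\setminus S_\o$, globalize an integrating factor, extend by Hartogs), and the first and third steps are fine. The difficulty is entirely in the middle step, and you acknowledge as much in your last sentence; but what you have written there does not yet constitute a proof. Two concrete issues. First, the transition data $\{\varphi_{\ab}\}$ take values in the \emph{non-abelian} pseudogroup of germs of biholomorphisms of $\C$, so ``trivializing the cocycle'' is not a statement about the vanishing of an abelian sheaf cohomology group, and neither simple connectivity of $U\setminus S_\o$ nor Scheja's $H^1(U\setminus S_\o,\O)=0$ disposes of it directly. Second, the alternative route you sketch --- finding a global closed $\theta$ with $d\o=\theta\wedge\o$ --- is the Godbillon--Vey mechanism: from any local choice one has $d\theta=\eta\wedge\o$ for some $\eta$, and one must then solve an infinite hierarchy of equations (or otherwise argue) to straighten $\theta$ to a closed form. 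This is precisely the substance of Malgrange's theorem; his original argument proceeds via a formal Frobenius theorem together with a convergence/approximation step, not by a one-line cohomological vanishing. So your proposal identifies the right architecture but leaves the load-bearing wall unbuilt.
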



We have the following proposition.
\begin{proposition} Let $\o\in \mathcal{F}(M,L)$ be a foliation and let $p\in S_n$ an isolated singularity, then any germ of vector field tangent to the foliation vanishes at $p$.
\end{proposition}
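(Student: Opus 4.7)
The plan is to argue by contradiction: assume $v$ is a germ of holomorphic vector field at $p$ tangent to $\FF_\o$ (that is, $i_v\o=0$) with $v(p)\neq 0$, and derive that the singular set $S$ must contain a one-dimensional curve through $p$, in violation of the hypothesis $p\in S_n$.

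First I would use the holomorphic flow-box theorem to rectify $v$: choose coordinates $(x_1,\ldots,x_n)$ centered at $p$ with $v=\partial/\partial x_1$. The tangency $i_v\o=0$ becomes the statement that $\o=\sum_{i\geq 2}a_i\,dx_i$ has no $dx_1$ component; this coordinate form itself will not be needed in the argument, but it makes the subsequent flow computations transparent.

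The key step is to couple tangency with integrability. Contracting $i_v$ into $\o\wedge d\o=0$ and using $i_v\o=0$ yields $\o\wedge i_v d\o=0$, and by Cartan's formula $\mathcal{L}_v\o=i_v d\o$, so $\o\wedge \mathcal{L}_v\o=0$. Off $S_\o$ the $1$-form $\o$ is nowhere zero, so this wedge identity forces $\mathcal{L}_v\o=h\,\o$ for a unique holomorphic function $h$ on $M\setminus S_\o$. Since $\cod(S_\o)\geq 2$, Hartogs' extension theorem extends $h$ to a holomorphic function on a full neighborhood of $p$. Then, writing $\va_t$ for the local flow of $v$, the family $\va_t^{*}\o$ satisfies the linear ODE $\tfrac{d}{dt}\va_t^{*}\o=(h\circ\va_t)\,\va_t^{*}\o$, with solution
\[
\va_t^{*}\o \;=\; \exp\!\Bigl(\int_0^t h\circ\va_s\,ds\Bigr)\,\o .
\]

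Finally, I would evaluate both sides at $p$. Since $\o(p)=0$, the right-hand side vanishes at $p$, hence $(\va_t^{*}\o)|_p=0$; as $T_p\va_t$ is an isomorphism this forces $\o(\va_t(p))=0$ for all sufficiently small $t$. But $\{\va_t(p)\}$ is a one-dimensional holomorphic curve through $p$ (because $v(p)\neq 0$), so it is contained in $S_\o$, contradicting the assumption that $p$ is an isolated point of $S$. I do not expect a substantive obstacle: the only step meriting care is the upgrade of the pointwise wedge-zero relation $\o\wedge\mathcal{L}_v\o=0$ to the global identity $\mathcal{L}_v\o=h\,\o$ with $h$ holomorphic across $S_\o$, which is precisely the codimension-$\geq 2$ Hartogs principle.
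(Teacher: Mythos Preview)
Your argument is correct. It differs from the paper's in a meaningful way, so a brief comparison is in order.

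The paper invokes Malgrange's theorem (stated immediately before the Proposition): since $p\in S_n$ is isolated, $\cod(S_\o)\geq 3$ near $p$, so $\o=g\,df$ with $g$ a unit. Rectifying the vector field to $\partial/\partial z_n$ and reading $i_v\o=0$ as $\partial f/\partial z_n\equiv 0$ shows $f$ is independent of $z_n$; then $\{df=0\}$ cannot be a single point. This is very short, but it rests on Malgrange's local first integral theorem, which is a deep result.

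Your route is more self-contained: you extract from integrability and tangency the eigenform relation $\mathcal L_v\o=h\,\o$ (with the Hartogs/second Riemann extension across $S_\o$ to make $h$ holomorphic), integrate the linear ODE for $\va_t^*\o$, and conclude that the $v$-orbit of $p$ lies in $S_\o$. This avoids Malgrange entirely, at the cost of a slightly longer computation. One small remark: you actually set up an even shorter argument yourself and then abandon it. Once you write $\o=\sum_{i\geq 2}a_i\,dx_i$ in the rectified chart, the singular set $\{a_2=\cdots=a_n=0\}$ is cut out by $n-1$ holomorphic functions, hence every component through $p$ has dimension $\geq 1$; this already contradicts $p\in S_n$ and uses neither integrability nor the flow. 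Your flow argument is perfectly valid, but this observation would trim it further.
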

\begin{proof}
	Let $\o=g df,\quad g\in \O^*_p,\, f\in \O_p$ be a 1--form representing the foliation at $p$. Let $\mathbf{X}\in\Theta_p$ be a vector field tangent to the foliation, i.e., $\o(\mathbf{X})=0$. 
	If $\mathbf{X}(p)\neq0$ there exists a coordinate system with $z(p)=0$ and $\mathbf{X}=\partial/\partial z_n$, then 
	\[
	0=\o(\mathbf{X})=g\cdot \left(\sum_{i=1}^n (\partial f/\partial z_i)dz_i(\partial/\partial z_n)\right)=g\cdot(\partial f/\partial z_n), \mbox{ therefore }\, \partial f/\partial z_n\equiv 0,
	\]
	and $f=f(z_1,\dots,z_{n-1})$, but this function does not have an isolated singularity.  
\end{proof}  

Now, we begin our study of the irreducible components of codimension two of the singular set of $\o$. Note that, given a section $\o\in H^0(M,\Om^1(L))$, along the singular set, the equation $\o_{\a}=\la_{\ab}\o_{\b}$ 
implies $d\o_{\a}|_{S}=(\la_{\ab}d\o_{\b})|_{S}.$ Then
\begin{equation}
\label{eq:do}
\{d\o_{\a}\}\in H^0(S,(\Om^2_M\otimes L)|_S).
\end{equation}

\subsection{The Kupka set}
These singularities has bee extensively studied and the main properties have been established in \cite{K,M}.

\begin{definition}
	For $\o\in \mathcal{F}(M,L)$. The Kupka set is
	\[ 
	K(\o)= \{ p\in M\,|\, \o(p)=0,\quad d\o(p)\neq0 \, \}.
	\]
\end{definition} 
The following properties of Kupka sets, are well known \cite{M}.
\begin{enumerate}
	\item $K(\o)$ is smooth of codimension-two.
	\item $K(\o)$ has \emph{local product structure\/} and the tangent sheaf $\FF$ is locally free near $K(\o)$.

	\item $K(\o)$ is subcanonically embedded and 
	\[
	\wedge^2 N_{K(\o)}=L|_{K(\o)},\quad K_{K(\o)}=(K_M\otimes L)|_{K(\o)}=K_{\FF}|_{K(\o)}. 
	\]
\end{enumerate}
Let $\o\in\mathcal{F}(n,d)$ be a foliation with $S_2=K(\o)$. 
By \cite{CS}, there exists a pair $(V,\si)$, where $V$ is a rank two holomorphic vector bundle and $\si\in H^0(\P^n,V)$, such that
\[
0\longrightarrow
{\mathcal{O}}\xrightarrow{\,\,\si \,\,} V
\longrightarrow {\mathcal{J}}_{K}(d+2)\rightarrow 0\quad \mbox{with}\quad \{\si=0\}=K  
\]
and the total Chern class
\[
c(V)=1 + (d+2)\cdot \mathbf{h} + \deg(K(\o))\mathbf{h}^2\in H^{\ast}(\P^n,\Z)\simeq \Z[\mathbf{h}]/\mathbf{h}^{n+1}.
\]
In 2009, Marco Brunella \cite{Br1} proved that following result, which in a certain sense say that the local transversal type of the singular set of foliation determines its behavior globally. Here we present a new proof of this fact. The techniques used in the proof could be of independent interest.

\begin{proposition}\label{KR}
	Let $\o\in\mathcal{F}(n,d)$ be a foliation with $S_2=K(\o)$, (connected if $n=3$) and of radial transversal type. Then $K(\o)$ is a complete intersection and $\o$ has a meromorphic first integral.  
\end{proposition}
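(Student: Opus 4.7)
The plan is to use the radial transversal type to force the rank two Serre bundle $V$ from the exact sequence
\[
0 \to \O \xrightarrow{\sigma} V \to \mathcal{J}_{K(\o)}(d+2) \to 0
\]
to split as a direct sum of line bundles on $\P^n$. Once $V \cong \O(a) \oplus \O(b)$ with $a+b=d+2$ and $ab=\deg K(\o)$, the two projections yield homogeneous polynomials $F, G$ of degrees $a, b$ with $K(\o)=\{F=G=0\}$ as a complete intersection, and the rational map $F/G : \P^n \dashrightarrow \P^1$ is the desired meromorphic first integral for $\o$.

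First I would spell out the radial hypothesis concretely. At each $p \in K(\o)$ there exist local coordinates $(z_1,\dots,z_{n-2},u,v)$ with $K(\o)=\{u=v=0\}$ locally and
\[
\o = u\,dv - v\,du = v^2\, d(-u/v).
\]
Thus $\o$ admits the local holomorphic first integral $-u/v$, the foliation has trivial transversal holonomy near $K(\o)$, and the pair $(u,v)$ provides a local trivialization of the normal bundle $N_{K(\o)}$ together with a local pencil $[u:v]$ of transversal lines.

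The heart of the proof is to globalize these local pencils into a second global section $\tau \in H^0(\P^n, V)$ independent of $\sigma$ and vanishing on $K(\o)$. The transition between two local models is a $\mathrm{GL}_2(\C)$ change of transversal frame, and the radial normal form forces this cocycle to be locally constant along $K(\o)$ (the radial foliation has a unique $\mathrm{PGL}_2(\C)$-symmetry fixing it, so no leaf-varying part survives). Connectedness of $K(\o)$ then collapses the ambiguity to a single element of $\mathrm{GL}_2(\C)$ and yields a global holomorphic pencil on a tubular neighborhood; Hartogs extension and the algebraicity of $\P^n$ upgrade it to an algebraic second section $\tau$. With $\sigma$ and $\tau$ in hand, the evaluation morphism $\O^{\oplus 2} \to V$ is surjective off $K(\o)$ and the resulting Koszul resolution (combined with Horrocks' splitting criterion, whose intermediate cohomology vanishing follows from the explicit local normal form) forces $V$ to split.

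The hard part will be the globalization step, where the connectedness hypothesis for $n=3$ (automatic for $n\geq 4$ by Barth--Lefschetz type results applied to the subcanonically embedded $K(\o)$, since $K_{K(\o)} = K_{\FF}|_{K(\o)}$) plays its essential role: without it, distinct components of $K(\o)$ could carry incompatible local pencils and no global meromorphic first integral would exist. The remainder is a direct translation between the splitting of $V$, the complete intersection structure of $K(\o)$, and the existence of the pencil $F/G$: from $\omega = u\,dv - v\,du$ locally and the gluing data one checks $\o$ is proportional to $G\,dF - F\,dG$ globally, which is exactly the statement that $F/G$ is a first integral.
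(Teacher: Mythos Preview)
Your overall architecture is reasonable, but it diverges sharply from the paper's proof, and the step you yourself flag as ``hard'' is genuinely unjustified as written.

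\medskip

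\textbf{What the paper does.} The paper's argument is entirely numerical and avoids any globalization of local transversal data. From \cite{CS} the radial transversal type forces
\[
c(V)=1+(d+2)\mathbf{h}+\tfrac{(d+2)^2}{4}\mathbf{h}^2=\bigl(1+\tfrac{d+2}{2}\mathbf{h}\bigr)^2,
\]
so $d$ is even and the twist $E=V(-\tfrac{d+2}{2})$ has $c_1(E)=c_2(E)=0$. The paper then proves a short lemma (Riemann--Roch plus Serre duality) that any rank two bundle on $\P^2$ with vanishing Chern classes is trivial, applies it to the restriction of $E$ to a linear $\P^2\hookrightarrow\P^n$, and invokes Horrocks' criterion to conclude $E$ is trivial on $\P^n$. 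Hence $V\cong\O(\tfrac{d+2}{2})^{\oplus 2}$, $K(\o)$ is a complete intersection, and the meromorphic first integral is obtained by citing \cite[Th.~A]{CL}. No use is made of connectedness, local pencils, holonomy, or Hartogs extension.

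\medskip

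\textbf{The gap in your approach.} Your key assertion --- that the transition cocycle between two radial charts is locally constant along $K(\o)$ because ``the radial foliation has a unique $\mathrm{PGL}_2(\C)$-symmetry fixing it'' --- is not correct as stated and does not yield what you need. The group of germs of biholomorphisms of $(\C^2,0)$ preserving the radial foliation (equivalently, preserving $u\,dv-v\,du$ up to a unit) is much larger than $\mathrm{GL}_2(\C)$: for instance $(u,v)\mapsto(u\phi,v\phi)$ for any unit $\phi$ works. Even restricting to the linear part, the cocycle for $N_{K(\o)}$ takes values in $\mathrm{GL}_2(\O_{K(\o)})$, and nothing in the Kupka local product structure prevents it from varying holomorphically along $K(\o)$; this is exactly the obstruction to $N_{K(\o)}$ (and hence $V$) splitting, and you cannot wave it away. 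Producing a second global section $\tau$ of $V$ is essentially equivalent to the conclusion you are trying to prove, so this step is where all the content lies --- and it is precisely what the paper's Chern-class argument circumvents. Your sketch is closer in spirit to the older literature (\cite{CL}, \cite{C}), but even there the globalization is not obtained by the mechanism you describe.
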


To prove Proposition \ref{KR}, we requires the following lemma. This result may be well known but for lack of a suitable reference we include the proof in an appendix.

\begin{lemma}\label{Splitting} Let $F$ be a rank two holomorphic vector bundle over $\P^2$ with $c_1(F)=0$ and $c_2(F)=0$. Then $F\simeq \O\oplus \O$, is holomorphically trivial. 
\end{lemma}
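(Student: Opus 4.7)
The plan is to produce a section of $F$ whose zero scheme is empty; such a section yields an extension $0\to\O\to F\to\O\to 0$ that must split because $H^1(\P^2,\O)=0$. The setup is Hirzebruch--Riemann--Roch on $\P^2$ with $c_1=c_2=0$: one computes $\chi(F)=2$. Because $\det F\cong\O_{\P^2}$, the rank-two bundle is self-dual, $F\cong F^*$, and Serre duality reads $h^2(F)=h^0(F(-3))$. To get $h^0(F)\ge 1$ I would split on $\mu$-semistability: if $F$ is not $\mu$-semistable, the Harder--Narasimhan destabilizer is a sub-line bundle $\O(a)\hookrightarrow F$ with $a\ge 1$, and already $h^0(F)\ge h^0(\O(a))\ge 3$; if $F$ is $\mu$-semistable, every sub-line bundle of $F$ has non-positive degree, so $F(-3)$ (of slope $-3$) has no sections, giving $h^2(F)=0$ and hence $h^0(F)\ge 2$ from $\chi(F)=2$.

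Next, analyze a section. Take a nonzero $s:\O\to F$ and let $\O(D)\hookrightarrow F$ be its saturation, yielding $0\to\O(D)\to F\to\mathcal{I}_Z(-D)\to 0$ with $D\ge 0$ effective and $Z$ zero-dimensional. Whitney multiplicativity forces $c_2(F)=-D^2+\ell(Z)=0$, so $\ell(Z)=D^2$. If $D=0$ one concludes immediately: $Z=\varnothing$, the sequence reads $0\to\O\to F\to\O\to 0$, and this splits because $\operatorname{Ext}^1(\O,\O)=H^1(\P^2,\O)=0$, yielding $F\cong\O\oplus\O$.

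The main obstacle is to exclude $D\ge 1$, i.e.\ to show that the chosen section has at worst isolated zeros. The cleanest route I would pursue is Bogomolov's equality case: the discriminant $\Delta(F)=c_1^2-4c_2$ vanishes, so on the simply connected surface $\P^2$ the rank-two bundle $F$ is projectively flat, hence projectively trivial; writing $F\cong L\otimes\O^{\oplus 2}$ and imposing $c_1(F)=2c_1(L)=0$ together with $\operatorname{Pic}(\P^2)=\Z$ forces $L=\O$, which returns us to the $\mu$-semistable branch of the dichotomy above and rules out any $D\ge 1$. An alternative finish, once $h^0(F)\ge 2$ is in hand, is to invoke Van de Ven's restriction theorem together with Grauert--Mülich applied to $\Delta(F)=0$ to see that $F|_L\cong\O_L\oplus\O_L$ for every line $L\subset\P^2$, and then conclude triviality of $F$ directly.
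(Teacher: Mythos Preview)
Your instinct to isolate ``exclude $D\ge 1$'' as the crux is exactly right, and this is where both your argument and the paper's run into trouble. The paper simply \emph{asserts} that the quotient $\mathcal{Q}=F/\O\!\cdot\!\tau$ is torsion free and moves on; you correctly saturate the section and see that one must rule out a divisorial saturation $\O(D)\hookrightarrow F$ with $\deg D\ge 1$. But your proposed fix via ``Bogomolov's equality case'' is circular: the passage from $\Delta(F)=0$ to ``projectively flat'' (equivalently, the equality case of Bogomolov--Gieseker, or the Kobayashi--L\"ubke argument) requires $F$ to be $\mu$-semistable --- exactly the hypothesis in dispute. Likewise Grauert--M\"ulich needs semistability. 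Note that in the semistable branch you need none of this machinery: by definition no $\O(D)$ with $\deg D\ge 1$ embeds in a $\mu$-semistable $F$ of slope $0$, so automatically $D=0$ and your argument finishes cleanly there.

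The unstable branch, however, cannot be handled --- because the lemma is \emph{false} as stated. Fix $p\in\P^2$. Since $H^0(\O(-5))=0$, the Cayley--Bacharach condition for the Serre construction is vacuous, and there is a locally free extension
\[
0\longrightarrow \O(1)\longrightarrow F\longrightarrow \mathcal{I}_p(-1)\longrightarrow 0.
\]
One checks $c_1(F)=0$ and $\chi(F)=\chi(\O(1))+\chi(\mathcal{I}_p(-1))=3-1=2$, so $c_2(F)=0$ by Riemann--Roch; yet $F\not\cong\O\oplus\O$ since $\O(1)$ does not embed in $\O\oplus\O$. Thus no argument can close the gap you identified without an extra hypothesis (semistability, or --- as is implicitly available in the paper's application --- a section with codimension-two zero locus coming from the Kupka bundle $V$).
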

Now, we prove Proposition \ref{KR}.

\begin{proof}[Proof of Proposition \ref{KR}] Let $(V,\si)$ be the vector bundle with a section defining the Kupka set as scheme. The radial transversal type implies \cite{CS}
	\[
	c(V)=1+(d+2)\cdot\mathbf{h}+\frac{(d+2)^2}{4}\cdot \mathbf{h}^2=\left(1+ \frac{(d+2)\cdot\mathbf{h}}{2}\right)^2\in H^{\ast}(\P^n,\Z)\simeq \Z[\mathbf{h}]/\mathbf{h}^{n+1}.
	\] 
	The vector bundle 
	$E=V(-\frac{d+2}{2})$, has $c_1(E)=0$ and $c_2(E)=0$. Let
	$\xi:\P^2\hookrightarrow \P^n$ be a linear embedding. By the preceding lemma we have  
	\[
	\xi^{\ast}E\simeq \O_{\P^2}\oplus \O_{\P^2}
	\]
	and by the Horrocks' criterion \cite[Theorem 2.3.2 pg. 22]{OSS}, 
	$E\simeq \O_{\P^n}\oplus \O_{\P^n}$ is trivial and hence $V$ splits as $\O_{\P^n}(\frac{d+2}{2})\oplus \O_{\P^n}(\frac{d+2}{2})$ and $K$ is a complete intersection. The existence of the meromorphic first integral follows from \cite[Th. A]{CL}.\end{proof}

If $\o$ is such that $K(\o)=S_2$ and connected, the set of non-Kupka points of $\o$ is  
\[
N\!K(\o)=S_3\cup\cdots\cup S_n.
\]

A generic rational map, that means, a \textit{Lefschetz or a Branched Lefschetz Pencil} 
$\va:\P^n\dasharrow \P^1$, has only isolated singularities away its base locus. The singular set of the foliation defined by the fibers of $\va$ is $S_n\cup S_2$. The Kupka set corresponds away from its base locus and $S_n=N\!K(\o)$ are the singularities as a map. $S_n$ is empty if and only if the degree of the foliation is $0$. The number 
$\ell(S_n)$ of isolated singularities counted with multiplicities can be calculated by\cite[Th. 3]{CSV}.
If $\o_p$ is a germ of form that defines the foliation at $p\in S_n$, we have 
\[
\ell(S_n)=\sum_{p\in S_n}\mu(\o_p,p),\quad \mu(\o,p)=dim_{\C}\frac{\O_p}{(\o_1,\dots,\o_n)},\quad \o_p=\sum_{i=1}^n \o_i dz_i.
\]
We have that $c_n(\FF)=\ell(S_n)$.

\section{Foliations with a non-Kupka component}
It is well known that $K(\o)\subset \{p\in M|\, j^1_p\o\neq0\}$, but the converse is not true. Our first result describes the singular points with this property.

\subsection{A normal form}
Now, we analyze the situation when there is an irreducible non-Kupka component of $S_2$.

\begin{proof}[Proof of Theorem \ref{main_theorem}]
	By hypotheses, $d\o(p)=0$ for any $p\in S_\o$.  Since 
	\[
	\o=\o_1+\cdots,\qquad d\o=d\o_1+\cdots =0,
	\]
	we get $d\o_1(p)=0$ for any $p\in S_\o$. Now, as $\o_1\neq0$ and $\cod(S_\o)=2$, we have $1\leq \cod(S_{\o_1})\leq 2$. We distinguish two cases.
	
	\begin{enumerate}
		\item $\cod(S_{\o_1})=2$: there is a coordinate system $(x_1,\ldots,x_n)\in\mathbb{C}^n$ such that $$\o_1=x_1dx_2+x_2dx_1.$$ 
		
		\item $\cod(S_{\o_1})=1$: there is a coordinate system $(x,\zeta)\in\C\times\C^{n-1}$ such that
		$x(p)=0$ and $\o_1=xdx$.
	\end{enumerate}
	
	The first case is known, the foliation $\mathcal{F}_{\o}$ is equivalent in a neighborhood of $0\in\C^n$ to a product of a dimension one foliation in a transversal section by a regular foliation of codimension two \cite[p. 31]{CM}. 
	
	In the second case, Loray's preparation theorem \cite{L}, shows that there exists a coordinate system $(x,\zeta)\in \C\times \C^{n-1}$, a germ $f\in \O_{\C^{n-1},0}$ with $f(0)=0$, and germs $g,h\in \O_{\C,0}$ such that the foliation is defined by the 1--form
	\begin{equation}\label{loray_form}
	\o=xdx+[g(f(\zeta))+xh(f(\zeta))]df(\zeta).
	\end{equation}
	Since $S_{\o_1}=\{x=0\}$ and $0\in S_{\o}$ is a smooth point, we can assume that ${S_{\o}}_{,p}=\{x=\zeta_1=0\}$, where $S_{{\o},p}$ is the germ of $S_{\o}$ at $p=0$. Therefore, 
	\[
	{S_{\o}}_{,p}=\{x=\zeta_1=0\}=\{x=g(f(\zeta))=0\}\cup
	\left\{x=\frac{\partial{f}}{{\partial{\zeta_1}}}=\cdots=\frac{\partial{f}}{{\partial{\zeta_{n-1}}}}=0 \right\}.
	\]
	Hence, either $g(0)=0$ and $\zeta_1|f$, or $g(0)\neq 0$ and $\zeta_1|\frac{\partial{f}}{\partial{\zeta_{j}}}$ for all $j=1,\ldots,n-1$.
	In any case, we have $\zeta_1|f$ and then $f(\zeta)=\zeta_1^k\psi(\zeta)$, where $\psi$ is a germ of holomorphic function in the variable $\zeta$; $k\in\mathbb{N}$ and $\zeta_1$ does not divide $\psi$. 
	We have two possibilities:
	
	\noindent
	{\textit{\bf $1^{st}$ case.--}}
	$\psi(0)\neq 0$. In this case, we consider the biholomorphism $$G(x,\zeta)=(x,\zeta_1\psi^{1/k}(\zeta),\zeta_2,\ldots,\zeta_n)=(x,y,\zeta_2,\ldots,\zeta_n)$$ where $\psi^{1/k}$ is a branch of the $k^{th}$ root of $\psi$, we get $f\circ G^{-1}(x,y,\zeta_2,\ldots,\zeta_n)=y^{k}$ and 
	\[
	G_{*}(\o)=xdx+(g(y^k)+xh(y^k))ky^{k-1}dy=xdx+(g_1(y)+xh_1(y))dy,
	\]
	where $g_1(y)=ky^{k-1}g(y^k)$, $h_1(y)=ky^{k-1}h(y^k)$.
	Therefore, $\tilde{\o}:=G_{*}(\o)$ is equivalent to $\o$ and moreover $\tilde{\o}$ is given by 
	\begin{equation}\label{equa_2}
	\tilde{\o}=xdx+(g_1(y)+xh_1(y))dy\quad\text{with}\quad S_{\tilde{\o}}=\{x=g_1(y)=0\}.
	\end{equation}
	Since $d\tilde{\o}=h_1(y)dx\wedge dy$ is zero identically  on ${\{x=g_{1}(y)=0\}}$, 
	we get $g_1|h_1$, so that $h_1(y)=(g_1(y))^{m}H(y)$, for some $m\in\mathbb{N}$ and such that $H(y)$ does not divided $g_1(y)$. Using the above expression for $h_1$ in (\ref{equa_2}), we have
	\[
	\tilde{\o}=xdx+g_{1}(y)(1+x(g_{1}(y))^{m-1}H(y))dy=xdx+g_{1}(y)(1+xg_{2}(y))dy,
	\]
	where 
	$g_2(y)=(g_{1}(y))^{m-1}H(y)$. Consider $\va:(\C,0)\times (\C^{n-1},0)\rightarrow (\C^{2},0)$ defined by $\va(x,\zeta)=(x,y)$, then 
	\begin{equation}\label{kupka_form}
	\o=\va^{\ast}(xdx+g_{1}(y)(1+xg_{2}(y))dy).
	\end{equation}
	
	\noindent
	{\textit{ \bf $2^{nd}$ case.--}}
	$\psi(0)=0$. We have $S_{{\o},p}=\{x=\zeta_1=0\}$ and 
	\begin{equation}\label{equa_1}
	\o=xdx+(g(\zeta^k_1\psi)+xh(\zeta^k_{1}\psi))d(\zeta^k_1\psi),  
	\end{equation}
	therefore 
	\begin{equation}
	\omega=xdx+(g(\zeta^k_1\psi)+xh(\zeta^k_{1}\psi))\zeta^{k-1}_1(k\psi d\zeta_1+\zeta_1d\psi).
	\end{equation}
	Note that $g(0)\neq 0$, otherwise $\{x=\zeta_1\psi(\zeta)=0\}$ would be contained in ${S_{\o}}_{,p}$, but it is contradiction because $S_{{\o},p}=\{x=\zeta_1=0\}\subsetneq\{x=\zeta_1\psi(\zeta)=0\}$. Furthermore $k\geq 2$, because otherwise $\zeta_1|\psi$. 
	Let $\va:(\C,0)\times (\C^{n-1},0)\rightarrow (\C^{2},0)$ be defined by $$\va(x,\zeta)=(x,\zeta^k_1\psi(\zeta))=(x,t),$$ then from (\ref{equa_1}), we get that 
	\[
	\o=\va^{\ast}(\eta), 
	\]
	where $\eta=xdx+(g(t)+xh(t))dt$. Since $\eta(0,0)=g(0)dt\neq0$, we deduce that $\eta$ has a non-constant holomorphic first integral $F\in\mathcal{O}_{\C^2,0}$ such that $dF(0,0)\neq 0$. Therefore, $F_1(x,\zeta)=F(x,\zeta^{k}_1\psi(\zeta))$ is a non-constant holomorphic first integral for $\omega$ in a neighborhood of $0\in\mathbb{C}^n$.  
\end{proof}

\subsection{Applications to foliations on $\P^n$}
In order to give some applications of Theorem \ref{main_theorem}, we need the Baum-Bott index associated to singularities of foliations of codimension one. 

Let $M$ be a complex manifold and let $\mathcal{G}_\o=(S,\mathcal{G})$ be a codimension one holomorphic foliation represented by $\o\in H^0(M,\Om^1(L))$. We have the exact sequence
\[
0\to \mathcal{G}\to \Theta_M \xrightarrow{\o} \mathcal{N}_{\mathcal{G}}\to 0,\quad 
\mathcal{N}_{\mathcal{G}}\simeq \mathcal{J}_S\otimes L.
\]
Set $M^0=M\setminus S$ and take $p_{0}\in M^0$. Then in a neighborhood $U_{\alpha}$ of $p_{0}$
the foliation $\mathcal{G}$ is induced by a holomorphic  $1$--form $\o_{\a}$ and there exists
a differentiable  $1$--form $\theta_{\a}$ such that
\[
d\o_\a=\theta_\a \wedge \o_\a
\]
Let $Z$ be an irreducible component of $S_2$. Take a generic point $p\in Z$, 
that is, $p$ is a point where $Z$ is smooth and disjoint from the other singular components.
Pick $B_{p}$ a ball centered at $p$ sufficiently small, so that
$S(B_p)$ is a sub-ball of $B_p$ of codimension $2$. Then the De Rham 
class can be integrated over an oriented $3$-sphere
$L_{p}\subset B^{*}_{p}$ positively linked with $S(B_{p})$:
\[
\BB(\mathcal{G}, Z)=\frac{1}{(2\pi i)^{2}}\int_{L_{p}}\theta\wedge d\theta.
\]
This complex number is the \textit{Baum-Bott residue of $\mathcal{G}$ along Z}. 
We have a particular case of the general Baum-Bott residues Theorem \cite{BB} reproved by Brunella and Perrone in \cite{BP}.

\begin{theorem}[Baum-Bott \cite{BB}]\label{BB} 
	Let $\mathcal{G}$ be a codimension one  holomorphic foliation on a  complex manifold $M$. Then 
	\[
	c_1(L)^2=c^{2}_{1}( \mathcal{N}_{\mathcal{G}})=\sum_{Z\subset S_2} \BB(\mathcal{G}, Z) [Z],
	\]
	where $\mathcal{N}_{\mathcal{G}}=\mathcal{J}_S\otimes L$ is the normal sheaf of $\mathcal{G}$ on $M$ and the sum is done over all irreducible components of  $S_2$.
\end{theorem}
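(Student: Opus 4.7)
The plan is the standard Baum--Bott localization argument: identify the two Chern classes, use integrability of $\o$ to exhibit $c_1(\mathcal N_{\mathcal G})^2$ as a cohomology class supported on $S$, and then evaluate the residue at each codimension-two component $Z$ as the linking period $\BB(\mathcal G,Z)$.

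First, since $\mathcal N_{\mathcal G}=\mathcal J_S\otimes L$ with $\cod(S)\ge 2$, the ideal-sheaf twist does not affect $c_1$ in $H^2(M,\C)$, so $c_1(\mathcal N_{\mathcal G})=c_1(L)$ and both equalities reduce to $c_1(L)^2=\sum_{Z\subset S_2}\BB(\mathcal G,Z)\,[Z]$. On $M^0=M\setminus S$ the local forms $\theta_\a$ satisfying $d\o_\a=\theta_\a\wedge\o_\a$ play the role of connection forms for the Bott (partial) connection on $L|_{M^0}$: combining $\o_\a=\la_{\ab}\o_\b$ with $d\o_\b=\theta_\b\wedge\o_\b$ yields $(\theta_\a-\theta_\b-d\log\la_{\ab})\wedge\o_\b=0$, so the $\theta_\a$ patch modulo $\o$ and $d\theta$ represents $c_1(L)|_{M^0}$.

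The crucial Bott-type vanishing follows by differentiating the integrability relation: $0=d^2\o_\a=d\theta_\a\wedge\o_\a-\theta_\a\wedge d\o_\a=d\theta_\a\wedge\o_\a$. Since $\o_\a$ is nonvanishing on $M^0$, de Rham division gives $d\theta_\a=\eta_\a\wedge\o_\a$ locally for some 1-form $\eta_\a$, and then $d\theta_\a\wedge d\theta_\a=\eta_\a\wedge\o_\a\wedge\eta_\a\wedge\o_\a=0$ using $\eta_\a\wedge\eta_\a=0$ and $\o_\a\wedge\o_\a=0$. Hence the de Rham class $c_1(L)^2|_{M^0}$ vanishes, so the global class $c_1(L)^2\in H^4(M,\C)$ is represented by a current supported on $S$. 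A \v Cech--de Rham/partition-of-unity localization (interpolating between the Bott connection on $M^0$ and a smooth connection on $L$ inside disjoint tubular neighborhoods of the generic smooth parts of each $Z\subset S_2$) then yields
$$c_1(L)^2=\sum_{Z\subset S_2}\res_Z\cdot[Z],$$
and Stokes' theorem on a shell $B_p\setminus V_p$, together with the fact that $\theta\wedge d\theta$ is closed on $M^0$ (since $d(\theta\wedge d\theta)=d\theta\wedge d\theta=0$), identifies
$$\res_Z=\frac{1}{(2\pi i)^2}\int_{L_p}\theta\wedge d\theta=\BB(\mathcal G,Z).$$
Higher codimension strata $S_j$ with $j\ge 3$ contribute nothing, because the relevant linking cycles have real dimension $\ge 5$ while $\theta\wedge d\theta$ is only a 3-form.

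The main obstacle is the localization step: one must verify that the residue is independent of the generic point $p\in Z$, of the chosen cutoff, and of the representative $\theta$ (which is well-defined only modulo $h\o$). Independence from $p$ follows from $d(\theta\wedge d\theta)=0$ on $M^0$ together with connectedness of the regular locus of $Z$; invariance under $\theta\mapsto\theta+h\o$ follows from a direct expansion showing the change is exact on $M^0$ via $d\o=\theta\wedge\o$ and $\o\wedge\o=0$; the remaining choices are handled by the standard transgression argument. With these checks, the identity is precisely the codimension-one specialization of the Baum--Bott theorem \cite{BB}, presented in the foliation-theoretic language of \cite{BP}.
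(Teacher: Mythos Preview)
The paper does not prove this theorem: it is quoted as a known result, attributed to Baum--Bott \cite{BB} and to the foliation-theoretic reproof by Brunella--Perrone \cite{BP}, and is used only as a tool in the subsequent corollary. There is therefore no ``paper's own proof'' to compare against.

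Your sketch is nonetheless a faithful outline of the Brunella--Perrone argument: the identification $c_1(\mathcal N_{\mathcal G})=c_1(L)$, the Bott partial connection $\theta_\a$ on $M^0$ defined by $d\o_\a=\theta_\a\wedge\o_\a$, the vanishing $d\theta_\a\wedge d\theta_\a=0$ coming from integrability (via $d\theta_\a\wedge\o_\a=0$ and division by the nonvanishing $\o_\a$), and the localization of $c_1(L)^2$ to $S_2$ with residues computed as linking integrals of the closed $3$-form $\theta\wedge d\theta$. The well-definedness checks you flag (independence of $p$, of the cutoff, and of the representative $\theta$ modulo $h\o$) are exactly the points one must address, and your indications for each are correct. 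One small caveat: the statement is asserted for an arbitrary complex manifold $M$, so the equality $c_1(L)^2=\sum_Z\BB(\mathcal G,Z)[Z]$ should be read in $H^4(M,\C)$ (or in compactly supported/Borel--Moore homology), and the localization step implicitly assumes enough properness or compactness for the tubular-neighborhood/transgression argument to make sense; in the applications of the paper $M=\P^n$, where this is unproblematic.
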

In particular, if $\mathcal{G}$ is a codimension one foliation on $\mathbb{P}^{n}$ of degree $d$,  then the normal sheaf $\mathcal{N}_{\mathcal{G}}=\mathcal{J}_S(d+2)$ and the Baum-Bott Theorem looks as follows 
\[
\sum_{Z} \BB(\mathcal{G}, Z) \deg[Z]=(d+2)^{2}.
\]

Now, if there exist a coordinates system   $(U, (x, y,z_3,\dots,z_n))$ around $p\in Z \subset S_2$ such that $x(p) = y(p) = 0$ and $S(\mathcal{G})\cap U= Z \cap U = \{x=y=0\}$. Assume that  
\[
\o|_U=P(x,y)dy-Q(x, y)dx
\]
is a holomorphic 1-form representing $\mathcal{G}|_{U}$. Let $\theta$ be the $\mathcal{C}^{\infty}$ (1,0)-form on $U\setminus \ Z$ given by
\[
\theta=\frac{(\frac{\partial{P}}{\partial x}+\frac{\partial{Q}}{\partial y})}{|P|^{2}+|Q|^2}(\bar{P}dx+\bar{Q}dy).
\]
Since  
$d\omega=\theta\wedge\omega$, then  
\begin{equation}\label{baum_1}
\BB(\mathcal{G},Z)=\frac{1}{(2\pi i)^2}\int_{L_p}\theta\wedge d\theta=\mbox{Res}_{0}\displaystyle\left\{\frac{\tr\big(D\mathbf{X} \big)\,dx\wedge dy}{PQ}\right\},
\end{equation}
where $\mbox{Res}_{0}$ denotes the Grothendieck residue,  $D\mathbf{X}$ is the Jacobian of the holomorphic  map $\mathbf{X}=(P,Q)$.
It follows from of Grothendieck residues \cite[Chapter 5]{GH} that if   $D\mathbf{X}(p)$ is non-singular, then
\[
\BB(\mathcal{G},Z)=\frac{\tr(D\mathbf{X}(p))^{2}}{\det(D\mathbf{X}(p))}.
\]

In the situation explained above, the tangent sheaf $\mathcal{G}(U)$ is locally free and generated by the holomorphic vector fields
\[
\mathcal{G}(U)=\left\langle \mathbf{X}=P(x,y)\frac{\partial}{\partial x}+Q(x,y)\frac{\partial}{\partial y},\frac{\partial}{\partial z_3},\dots,\frac{\partial}{\partial z_n}\right\rangle
\]
and the vector field $\mathbf{X}$ carries the information of the Baum--Bott residues.

The next result, in an application of Theorem \ref{main_theorem}
\begin{theorem}
	Let $\o\in \mathcal{F}(M,L)$ be a foliation and $Z\subset S_2\setminus K(\o)$. Suppose that $Z$ is smooth and $j^1_p\o\neq0$ for all $p\in Z$, then $\BB(\mathcal{F}_\o,Z)=0$.
\end{theorem}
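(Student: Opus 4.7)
The plan is to apply Theorem \ref{main_theorem} at a generic point $p \in Z$ and, in each of its surviving normal forms, to exhibit an explicit holomorphic $1$--form $\theta$ near $p$ satisfying $d\o = \theta \wedge \o$ and $\theta \wedge d\theta \equiv 0$. Since by (\ref{baum_1}) the Baum--Bott residue of $\mathcal{F}_\o$ along $Z$ is $(2\pi i)^{-2}\int_{L_p} \theta \wedge d\theta$, this will force the residue to vanish.

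First I would fix a smooth point $p$ of $Z$ lying off the other components of $S$, so that $S_\o = Z$ locally at $p$, and check the hypotheses of Theorem \ref{main_theorem}: $\cod(S_\o) = 2$, $p$ is smooth in $S_\o$, $d\o$ vanishes on $S_\o$ (because $Z$ consists of non--Kupka points), and $j^1_p \o \neq 0$ by assumption. Case $(1)$ is then immediately excluded: its $1$-jet $x_1 dx_2 + x_2 dx_1$ has $d\o(p) = 2\,dx_1 \wedge dx_2 \neq 0$, placing $p \in K(\o)$ contrary to $Z \cap K(\o) = \emptyset$.

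In case $(2)$, with $\o = x_1 dx_1 + g_1(x_2)(1 + x_1 g_2(x_2))\,dx_2$ and $d\o = g_1(x_2)g_2(x_2)\, dx_1 \wedge dx_2$, I would set
\[
\theta = \frac{g_2(x_2)}{1 + x_1 g_2(x_2)}\,dx_1.
\]
A direct verification gives $\theta \wedge \o = d\o$, and since $\theta$ involves only $dx_1$ while $d\theta$ is a multiple of $dx_1 \wedge dx_2$, one has $\theta \wedge d\theta \equiv 0$. In case $(3)$, $\o$ admits a non-constant holomorphic first integral $F$; the relation $\o \wedge dF = 0$ together with $\cod(S_\o) = 2$ (which excludes any divisorial zero of a scalar factor) gives $\o = g\,dF$ locally with $g$ holomorphic, and $j^1_p \o \neq 0$ forces $g(p) \neq 0$. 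Then $\theta = dg/g$ is holomorphic near $p$, satisfies $d\o = dg \wedge dF = \theta \wedge \o$, and has $d\theta = 0$, so again $\theta \wedge d\theta \equiv 0$.

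The main point needing justification is that the Baum--Bott residue may be computed with such a holomorphic $\theta$, defined on a full neighborhood of $p$, rather than the specific $\mathcal{C}^\infty$ form used in (\ref{baum_1}). This follows from the standard invariance of the residue under the choice of trivialization of $d\o \equiv 0 \pmod{\o}$; alternatively one argues directly via Stokes' theorem on the $4$-ball in the transversal $2$-plane bounded by $L_p$, using that our $\theta$ extends holomorphically across $Z$ so that $d(\theta \wedge d\theta) = d\theta \wedge d\theta$ is a holomorphic $(4,0)$-form on the complex $2$-disk and therefore vanishes for dimensional reasons. Combined with the identity $\theta \wedge d\theta \equiv 0$, this yields $\BB(\mathcal{F}_\o, Z) = 0$.
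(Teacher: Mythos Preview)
Your exclusion of case (1) of Theorem~\ref{main_theorem} rests on a miscalculation: the $1$--jet there is $\omega_1 = x_1\,dx_2 + x_2\,dx_1 = d(x_1x_2)$, which is \emph{closed}, so $d\omega(p)=d\omega_1(p)=0$, not $2\,dx_1\wedge dx_2$. (You seem to have computed $d(x_1\,dx_2 - x_2\,dx_1)$ instead.) Hence $p$ is a genuine non--Kupka point in case (1), and this case cannot be thrown away. The paper treats it by using the product structure: after writing $\omega|_U=P(x,y)\,dy-Q(x,y)\,dx$ with linear part $x\,dy+y\,dx$, the transversal vector field $\mathbf{X}=(P,Q)$ has $D\mathbf{X}(p)$ with eigenvalues $1,-1$, so $\tr(D\mathbf{X}(p))=0$ and formula~(\ref{baum_1}) gives $\BB(\mathcal{F}_\omega,Z)=\tr^2/\det=0$.

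Your overall strategy is sound and in fact handles case (1) with no extra work once you stop trying to exclude it: the product structure puts $\omega$ (up to a unit) in $\Omega^1_{\C^2}$, so any $\theta$ with $d\omega=\theta\wedge\omega$ can be chosen in $\Omega^1_{\C^2}$ as well, and then $\theta\wedge d\theta$ is a $3$--form on a $2$--dimensional transversal, hence identically zero. Your treatments of cases (2) and (3) are correct; they differ from the paper only in that the paper invokes \cite[Lemma~3.9]{CL1} for the residue in case (2) and cites the well--known vanishing in the presence of a holomorphic first integral for case (3), whereas you produce explicit $\theta$'s. That is a perfectly acceptable, more self--contained route---you just need to add the (easy) argument for case (1) rather than dismiss it.
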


\begin{proof} We work in a small neighborhood $U$ of $p\in Z\subset M$. According to Theorem \ref{main_theorem} there exists a coordinate system $(x,y,z_3,\dots,z_n)$ at $p$ such that $Z\cap U=\{x=y=0\}$ and one has three cases. In the first case $\mathcal{F}_\o$ is the product of a dimension one foliation in a section transversal to $Z$ by a regular foliation of codimension two and $j^{1}_{p}(\o)=xdy+ydx$. In this case, it follows from (\ref{baum_1}) that $\BB(\mathcal{F}_{\o},Z)=0$. In the second case
	$$\o=xdx+g_1(y)(1+xg_2(y))dy,$$ where $g_1, g_2\in\mathcal{O}_{\C,0}$ and it follows from \cite[Lemma 3.9]{CL1} that  
	\[
	\BB(\mathcal{F}_\o,Z)=\res_{t=0}\left[\frac{(g_1(t) g_2(t))^2 dt}{g_1(t)}\right]=\res_{t=0}\left[ g_1(t) (g_2(t))^2\right].
	\]
	Since $g_1(y) (g_2(y))^2$ is holomorphic at $y=0$, we get $\BB(\mathcal{F}_\o,Z)=0.$ In the third case $\mathcal{F}_{\o}$ has a holomorhic first integral in neighborhood of $p$ and is known that $\BB(\mathcal{F}_\o,Z)=0.$
\end{proof}

The Baum-Bott formula implies the following result.

\begin{corollary}
	Let $\o\in \mathcal{F}(n,d)$, $n\geq 3$, be a foliation with $K(\o)=\emptyset$. Then there exists a smooth point $p\in S_2$ such that $j^1_p\o= 0$.
\end{corollary}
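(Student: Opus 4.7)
The plan is to argue by contradiction: assume that $j^1_p\o\neq 0$ at every smooth point $p\in S_2$, and derive a numerical impossibility from the Baum--Bott formula. Since $K(\o)=\emptyset$ by hypothesis, every irreducible component $Z$ of $S_2$ automatically satisfies $Z\cap K(\o)=\emptyset$. Fix any such $Z$ and choose a generic point $p\in Z$, meaning a smooth point of $Z$ disjoint from the other components of $S$; this is possible by irreducibility. By the standing assumption $j^1_p\o\neq 0$, and so the preceding theorem applies at $p$: using the local normal form from Theorem \ref{main_theorem}, each of the three alternatives produces a vanishing local Baum--Bott residue. Since this residue depends only on the behavior at a generic smooth point of $Z$, we conclude $\BB(\mathcal{F}_\o,Z)=0$ for every irreducible component $Z\subset S_2$.

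Next I would invoke the Baum--Bott Theorem. On $\P^n$ the normal sheaf of $\mathcal{F}_\o$ is $\mathcal{N}_{\mathcal{F}_\o}=\mathcal{J}_S(d+2)$, hence $c_1(\mathcal{N}_{\mathcal{F}_\o})^2=(d+2)^2\mathbf{h}^2$. Matching this against $\sum_{Z\subset S_2}\BB(\mathcal{F}_\o,Z)[Z]$ in $H^4(\P^n,\Z)$ yields
$$(d+2)^2=\sum_{Z\subset S_2}\BB(\mathcal{F}_\o,Z)\deg(Z)=0,$$
which is impossible since $d\geq 0$. This contradiction forces the existence of a smooth point $p\in S_2$ with $j^1_p\o=0$, as required.

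I do not expect any substantive obstacle; the only fine point is observing that, although the preceding theorem is phrased with the whole component $Z$ globally smooth and $j^1\o$ nonvanishing on all of $Z$, the Baum--Bott residue is computed at a single generic smooth point, so the vanishing conclusion survives under the weaker hypothesis used here. Once this is made explicit, the rest is immediate from the Baum--Bott formula and the positivity of $(d+2)^2$.
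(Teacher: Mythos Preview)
Your proposal is correct and follows exactly the paper's own approach: argue by contradiction, use the preceding theorem to obtain $\BB(\mathcal{F}_\o,Z)=0$ for every irreducible component $Z\subset S_2$, and then derive $0<(d+2)^2=\sum_Z \BB(\mathcal{F}_\o,Z)\deg(Z)=0$ from the Baum--Bott formula. Your extra remark that the residue is computed at a single generic smooth point (so one does not need $Z$ globally smooth to invoke the preceding theorem) is a welcome clarification that the paper leaves implicit.
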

\begin{proof}
	If for all smooth point $p\in S_2$ one has $j^1_p\o\neq 0$, the above theorem shows that
	$\BB(\mathcal{F}_\o,Z)=0$ for all irreducible components $Z\subset S_2$. By Baum--Bott's theorem, we get
	\[
	0<(d+2)^2=\sum_{Z\subset S_2} \BB(\mathcal{F}_\o,Z)=0
	\] 
	which is a contradiction. Therefore there exists a smooth point $p\in S_2$ such that $j^1_p\o=0$. \end{proof}
In particular, if $\o\in \mathcal{F}(n,d)$, $n\geq 3$, is a foliation with $j^1_p\o\neq 0$ for any $p\in \P^n$, then its Kupka set is not empty.

\section{The number of non-Kupka points}

Through this section, we consider codimension one foliations on $\P^3$, but some results remain valid to codimension one foliations on others manifolds of dimension three.

\subsection{Simple singularities} 
Let $\o$ be a germ of 1--form at $0\in \C^3$. We define the \textit{rotational} of $\o$ as the unique vector field $\mathbf{X}$ such that
\[
rot(\o)=\mathbf{X} \Longleftrightarrow d\o=\im_{\mathbf{X}}dx\wedge dy\wedge dz,
\] 
moreover $\o$ is integrable if and only if $\o(rot(\o))=0$.

Let $\o$ be a germ of an integrable 1--form at $0\in \C^3$. We say that $0$ is a \textit{simple singularity} of $\o$ if $\o(0)=0$ and either $d\o(0)\neq0$ or $d\o$ has an isolated singularity at $0$. In the second case, these kind of singularities, are classified as follows
\begin{enumerate}
	\item \textit{Logarithmic}. The second jet $j^2_0(\o)\neq 0$ and the linear part of $\mathbf{X}=rot(\o)$ at $0$ has non zero eigenvalues.
	
	\item \textit{Degenerated}. The rotational has a zero eigenvalue, the other two are non zero and necessarily satisfies the relation $\la_1+\la_2=0$.
	
	\item \textit{Nilpotent}. The rotational vector field $\mathbf{X},$ is nilpotent as a derivation.
\end{enumerate}

The structure near simple singularity is known  \cite{CCGL}. If $p\in S$ is a simple singularity and $d\o(p)=0$, then $p$ is a singular point of $S$.

\begin{theorem}
	Let $\o\in \Omega^{1}(\C^3,0),$ $n\geq3$, be a germ of integrable 1-form such that $\o$ has a simple singularity at $0$ then the tangent sheaf $\FF=Ker(\o)$ is locally free at $0$ and it is generated by
	$\langle rot(\o),\mathbf{S} \rangle$, where $\mathbf{S}$ has non zero linear part.   
\end{theorem}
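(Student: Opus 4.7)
The plan is to split according to the definition of ``simple'': (i) the Kupka case $d\o(0)\neq 0$; (ii) the non-Kupka simple case where $d\o(0)=0$ but $d\o$ has an isolated zero at $0$. Note that integrability of $\o$ gives $\o(rot(\o))=0$, so $rot(\o)\in\FF$ unconditionally; the task is to produce the companion $\mathbf{S}$.

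For case (i), Kupka's theorem provides coordinates $(x,y,z)$ in which $\o=P(x,y)\,dx+Q(x,y)\,dy$ with $(P,Q)$ a regular sequence in $\O_{\C^{2},0}$. A direct computation gives $rot(\o)=(\partial_{x}Q-\partial_{y}P)\,\partial_{z}$, a unit multiple of $\partial_{z}$, while $\mathbf{S}:=-Q\,\partial_{x}+P\,\partial_{y}$ satisfies $\o(\mathbf{S})=0$ and has non-zero linear part (since $d\o(0)\neq 0$ forces $j^{1}_{0}\o\neq 0$). The Koszul relations for the regular sequence $(P,Q)$ show that any $a\partial_{x}+b\partial_{y}+c\partial_{z}\in\ker(\o)$ is an $\O$-combination of $\partial_{z}$ and $\mathbf{S}$; hence $\FF=\O\langle\partial_{z},\mathbf{S}\rangle=\O\langle rot(\o),\mathbf{S}\rangle$.

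For case (ii), $rot(\o)$ has an isolated zero at $0$, so its components form a regular sequence. I would invoke the trichotomy just stated in the paper---logarithmic, degenerated, nilpotent---together with the corresponding normal forms from \cite{CCGL}, and read off $\mathbf{S}$ in each subcase: in the logarithmic type, a coordinate-scaling vector field associated to the non-zero eigenvalues of $D\,rot(\o)(0)$; in the degenerated type, a saddle-node companion along the non-degenerate direction; in the nilpotent type, a Jordan-type semisimple companion of $D\,rot(\o)(0)$. In each case $\o(\mathbf{S})=0$, $D\mathbf{S}(0)\neq 0$, and the wedge $rot(\o)\wedge\mathbf{S}$ vanishes at most at $0$ are verified directly on the normal form.

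To conclude generation at $0$, I use that $\FF$ is reflexive (as the kernel of $\o\colon\O^{3}\to\O$), hence has depth $\geq 2$ on the smooth $3$-fold $(\C^{3},0)$. From $rot(\o)\wedge\mathbf{S}\not\equiv 0$ one obtains an injection $\O^{2}\hookrightarrow\FF$ whose cokernel $T$ is torsion supported at $\{0\}$; the local cohomology sequence together with the vanishings $H^{0}_{\{0\}}(\FF)=H^{1}_{\{0\}}(\FF)=0$ (depth $\geq 2$) and $H^{0}_{\{0\}}(\O^{2})=H^{1}_{\{0\}}(\O^{2})=0$ (depth $3$) forces $T=0$, proving both local freeness of $\FF$ and the stated generation. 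The main obstacle is case (ii), particularly the nilpotent subcase: the linear part of $rot(\o)$ carries no eigenvalue information, so producing $\mathbf{S}$ with non-zero linear part requires the full normal-form classification of \cite{CCGL}.
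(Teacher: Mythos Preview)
Your proposal is essentially correct but takes a genuinely different route from the paper in case (ii). The paper does \emph{not} split into the logarithmic/degenerated/nilpotent trichotomy and does not invoke the normal forms of \cite{CCGL}. Instead it gives a uniform one-line construction of $\mathbf{S}$ via the Koszul complex of $\mathbf{X}=rot(\o)$: since $\mathbf{X}$ has an isolated zero at $0$, the Koszul complex
\[
0\to \Om^{3}_{\C^{3},0}\xrightarrow{\imath_{\mathbf{X}}}\Om^{2}_{\C^{3},0}\xrightarrow{\imath_{\mathbf{X}}}\Om^{1}_{\C^{3},0}\xrightarrow{\imath_{\mathbf{X}}}\O_{\C^{3},0}\to 0
\]
is exact in the middle, and integrability gives $\imath_{\mathbf{X}}\o=0$; hence $\o=\imath_{\mathbf{X}}\theta$ for some $\theta\in\Om^{2}_{\C^{3},0}$, and the isomorphism $\Theta_{\C^{3},0}\simeq\Om^{2}_{\C^{3},0}$ via $\mathbf{Z}\mapsto\imath_{\mathbf{Z}}\nu$ (with $\nu=dx\wedge dy\wedge dz$) produces $\mathbf{S}$ with $\o=\imath_{\mathbf{X}}\imath_{\mathbf{S}}\nu$. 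This identity immediately says that $\o(\mathbf{Z})=\nu(\mathbf{S},\mathbf{X},\mathbf{Z})$, so $\langle\mathbf{X},\mathbf{S}\rangle\subset\FF$ and the generation follows.

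What each approach buys: the paper's Koszul argument completely sidesteps the obstacle you yourself flag---the nilpotent subcase---and needs no classification input whatsoever; it is also the argument that generalises cleanly. Your approach, on the other hand, is more explicit in case (i) and supplies something the paper's proof leaves implicit: your reflexivity/depth argument that the inclusion $\O^{2}\hookrightarrow\FF$ has zero cokernel is a genuine justification of local freeness, whereas the paper simply asserts generation from the formula $\o=\imath_{\mathbf{X}}\imath_{\mathbf{S}}\nu$. If you replace your case-by-case production of $\mathbf{S}$ by the Koszul step above and keep your depth argument for the conclusion, you obtain a proof that is both shorter than yours and more complete than the paper's.
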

\begin{proof} Let $\o$ be a germ at $0\in \C^3$ of an integrable 1--form and $0$ a simple non-Kupka singularity. Then $0\in \C^3$ is an isolated singularity of $\mathbf{X}=rot(\o)$. 
	Consider the Koszul complex of the vector field $\mathbf{X}$ at $0$ 
	\[
	\mathbb{K}(\mathbf{X})_0: 0\to \Om^3_{\C^3,0}
	\xrightarrow{\imath_{\mathbf{X}}} \Omega^2_{\C^3,0}
	\xrightarrow{\imath_{\mathbf{X}}} \Omega^1_{\C^3,0}
	\xrightarrow{\imath_{\mathbf{X}}} \O_{\C^3,0} \to 0
	\]
	Since $\o(\mathbf{X})=0$, then $\o\in H^1(\mathbb{K}(\mathbf{X})_0)$ that vanishes because $\mathbf{X}$ has an isolated singularity at $0$.  
	Therefore, there exists $\theta\in \Omega^2_{\C^3,0}$ such that $\imath_{\mathbf{X}}\theta=\o$. 
	The map 
	$\Theta_{\C^3,0}\ni \mathbf{Z}\mapsto \imath_{\mathbf{Z}}dx\wedge dy\wedge dz\in \Omega^2_{\C^3,0}$
	is an isomorphism, hence
	\[
	\o=\imath_{\mathbf{X}}\theta,\quad\mbox{and}\quad \theta=\imath_{\mathbf{S}}dx\wedge dy\wedge dz,
	\quad\mbox{implies}\quad
	\o=\imath_{\mathbf{X}}\theta=\imath_{\mathbf{X}}\imath_{\mathbf{S}}dx\wedge dy\wedge dz
	\]
	and then, the vector fields $\{\mathbf{X},\mathbf{S}\}$ generate the sheaf $\FF$ in a neighborhood of $0$.
\end{proof}

%
%
%

Let $\o\in\mathcal{F}(3,d)$ be a foliation and $Z \subset S_2$ be a connected component of $S_2$. Assume that $Z$ is a local complete intersection and has only simple singularities. We will calculate the number $|N\!K(\o)\cap Z|$ of non-Kupka points in $Z$.


\begin{proof}[Proof of Theorem \ref{Thm2}]
	Let $\mathcal{J}$ be the ideal sheaf of $Z$. Since $Z$ is a local complete intersection, consider the exact sequence 
	\[
	0\rightarrow \mathcal{J}/\mathcal{J}^{2}
	\rightarrow \Om^1 \otimes \O_{Z}
	\rightarrow \Om^{1}_{Z}
	\rightarrow0
	\]
	Taking $\wedge^2$ and twisting by $L=K_{\P^3}^{-1}\otimes K_{Z}=K_{Z}(4)$  we get
	\[
	0\rightarrow 
	\wedge^{2}\mathcal{J}/\mathcal{J}^{2}\otimes L
	\rightarrow \Om^2_{\P^3}|_{Z}\otimes L 
	\rightarrow \cdots
	\]
	Since $Z\subset S$, the singular set, we have seen before that 
	\[
	d\o|_{Z}\in H^{0}(Z,\wedge^{2}(\mathcal{J}/\mathcal{J}^{2})\otimes L)
	\]
	Now, from the equalities of sheaves 
	\[
	K_{Z}^{-1}\otimes K_{\P^3}\simeq \wedge^{2}(\mathcal{J}/\mathcal{J}^{2}),\quad \mbox{and} \quad L\simeq K_{\P^3}^{-1}\otimes K_{\FF}
	\]
	we have 
	\[
	H^{0}(Z,\wedge^{2}(\mathcal{J}/\mathcal{J}^{2})\otimes L)=H^{0}(Z,K^{-1}_{Z}\otimes K_{\FF}|_{Z}),
	\]
	the non-Kupka points of $\o$ in $Z$ satisfies $d\o|_{Z}=0$, denoting $$D_\o=\sum_{p\in Z}ord_p(d\o)$$ the associated divisor to  $d\o|_{Z}$, one has
	\[
	\deg(D_{\o})=\deg(K_{\FF})-\deg(K_{Z}),
	\]
	as claimed.
\end{proof}

\begin{remark}
	The method of the proof works also in projective manifolds, and  does not depends on the integrability condition.
\end{remark}

\subsection{Examples}
We apply Theorem 2 for some codimension one holomorphic foliations on $\P^3$ and determine the number of non-Kupka points.

\begin{example}[Degree two Logarithmic foliations] 
	Recall that the canonical bundle of a degree two foliation of $\P^3$ is trivial. There are two irreducible components of logarithmic foliations in the space of foliations of $\P^3$ of degree two:
	$\mathcal{L}(1,1,2)$ and $\mathcal{L}(1,1,1,1)$. We analyze generic foliations on each component.
	
	$\mathcal{L}(1,1,2)$: let $\o$ be a generic element of $\mathcal{L}(1,1,2)$ and consider its singular scheme $S=S_2\cup S_3$. By \cite[Theorem 3]{CSV} $\ell(S_3)=2$. On the other hand, $S_2$ has three irreducible components, two quadratics and a line, the arithmetic genus is $p_a(S_2)=2$. Note that Theorem \ref{Thm2}, implies that the number $| N\!K(\o)\cap S_2|$, of non-Kupka points in $S_2$ is 
	\[
	| N\!K(\o)\cap S_2|=deg(D_\o)=\deg(K_{\FF})-\deg(K_{S_2})=-\chi(S_2)=2. 
	\]
	The non-Kupka points of the foliation $\mathcal{F}_{\o}$ are $|N\!K(\o)|=\ell(S_3)+|NK\cap S_2|=4$.
	
	$\mathcal{L}(1,1,1,1)$: let $\o$ be a generic element of $\mathcal{L}(1,1,1,1)$ then the tangent sheaf is $\O\oplus\O$ and the singular scheme $S=S_2$ \cite{GP} moreover consists of 6 lines given the edges of a tetrahedron, obtained by intersecting any two of the four invariant hyperplanes $H_i$. The arithmetic genus is $p_a(S_2)=3$, by Theorem \ref{Thm2}, $|N\!K(\o)|=|N\!K(\o)\cap S_2|=4$, corresponding to the vertices of the tetrahedron where there are simple singularities of logarithmic type. 
\end{example}

\begin{example} [The exceptional component $\mathcal{E}(3)$]
	The leaves of a generic foliation 
	$\o\in\mathcal{E}(3)\subset \mathcal{F}(3,2),$ are the orbits of an action of $\mathbf{Aff}(\C)\times\P^3\rightarrow\P^3$ and its tangent sheaf is $\O\oplus\O$ \cite{CCGL,GP}.
	Its singular locus $S=S_2$ has $deg(S)=6$ and three irreducible components: a line $L,$ a conic $C$ tangent to $L$ at a point $p$, and a twisted cubic $\Gamma$ with $L$ as an inflection line at $p$. The point $N\!K(\o)=L\cap C\cap\Gamma=\{p\}\subset S$ is the only non-Kupka point.
	
	The arithmetic genus is $p_a(S)=3$ and the canonical bundle of the foliation again is trivial, by Theorem \ref{Thm2}, the number of non-Kupka points $|N\!K(\o)|=4$. Therefore the non-Kupka divisor $N\!K(\o)\cap S=4p$. 
	If $\o$ represents the foliation at $p$, then $\mu(d\o,p)=\mu(rot(\o),p)=4.$  
\end{example}

\vspace{0.5cm}

\noindent\emph{ Acknowledgements.\/} 
The first author thanks the Federal University of Minas Gerais UFMG, IMPA, and IMECC--UNICAMP for the hospitality during the elaboration of this work. The third author thanks the IMCA-Per\'u for the hospitality. Finally, we would like to thank  the referee by the suggestions, comments and improvements to the exposition.

\section{Appendix}

We prove Lemma \ref{Splitting}.

\begin{proof}
	First, we see that $h^0(F)\geq1$. By Riemann--Roch--Hirzebruch, we have 
	\[
	\chi(F)=h^0(F)-h^1(F)+h^2(F)=[ch(F)\cdot Td(\P^2)]_2=2,
	\] 
	then 
	\[ h^0(F)+h^2(F)=[ch(F)\cdot Td(\P^2)]_2+h^1(F)\geq [ch(F)\cdot Td(\P^2)]_2= 2
	\]
	By Serre duality \cite{GH, OSS}, we get $h^2(F)=h^0(F(-3))$. Moreover  
	$h^0(F)\geq h^0(F(-k))$ for all $k>0$, hence $h^0(F)\geq1$. 
	Let $\tau\in H^0(F)$ be a non zero section, consider the exact sequence 
	\begin{equation}\label{eq:exs}
	0\longrightarrow
	{\mathcal{O}}\stackrel{\cdot\tau}{\longrightarrow} F
	\longrightarrow \mathcal{Q}\longrightarrow 0\quad \mbox{with} \quad 
	\mathcal{Q}=F/\mathcal{O}.
	\end{equation}
	The sheaf $\mathcal{Q}$ is torsion free, 
	therefore $\mathcal{Q}\simeq \mathcal{J}_{\Sigma}$ for some $\Sigma\subset \P^2$. The sequence (\ref{eq:exs}), is a free resolution of the sheaf $\mathcal{Q}$ with vector bundles with zero Chern classes. 
	From the definition of Chern classes for coherent sheaves \cite{BB}, we get $c(\mathcal{Q})=1$, in particular $\deg(\Sigma)=c_2(\mathcal{Q})=0$, we conclude that $\Sigma=\emptyset$ and  
	$\mathcal{Q}\simeq \O$. Then $F$ is an extension of holomorphic line bundles, hence it splits \cite[p. 15]{OSS}.\end{proof}

\bibliographystyle{amsalpha}

\end{document}